\g@addto@macro{\endabstract}{\@setabstract}
\newcommand{\authorfootnotes}{\renewcommand\thefootnote{\@fnsymbol\c@footnote}}%
\theoremstyle{plain}
\newtheorem{thm}{Theorem}[section]
\newtheorem{lem}{Lemma}[section]
\newtheorem{cor}{Corollary}[section]
\theoremstyle{remark}
\numberwithin{equation}{section}
\begin{document}

\title[L\MakeLowercase{ocal} A\MakeLowercase{ronson}-B\MakeLowercase{\'{e}nolan type gradient estimates }]
{\textbf{L\MakeLowercase{ocal} A\MakeLowercase{ronson}-B\MakeLowercase{\'{e}nolan type gradient estimates   for the porous medium type
equation under} R\MakeLowercase{icci flow}}}

\author[W. Wang, H. Zhou, D. Xie]{W\MakeLowercase{en} W\MakeLowercase{ang} \qquad H\MakeLowercase{ui} Z\MakeLowercase{hou} \qquad D\MakeLowercase{apeng} X\MakeLowercase{ie}}
\address[W. Wang$^{1,2}$, H. Zhou$^{1,2}$, D. Xie$^{1}$]{1. School of Mathematics and Statistics, Hefei Normal
University, Hefei 230601,P.R.China;}
\address{2. School of mathematical Science, University of Science and
Technology of China, Hefei 230026, China}
\email{\href{mailto: W. Wang <wwen2014@mail.ustc.edu.cn>}{wwen2014@mail.ustc.edu.cn}}

\today

\begin{abstract}
In this paper, we investigate some new local
Aronson-B\'{e}nilan
 type gradient estimates  for positive solutions of the
porous medium equation
$$
u_{t}=\Delta u^{m},
$$
under Ricci flow.
 As application, the related Harnack inequalities are derived.
 Our results generalize known results. These results in the paper can be regard as generalizing the gradient
estimates of Lu-Ni-V\'{a}zquez-Villani and Huang-Huang-Li to the Ricci flow.
\end{abstract}

\keywords{ porous medium equation; Gradient estimate; Harnack inequality}

\subjclass[2010]{ 58J35, 35K05,  53C21}

\thanks{This paper was typeset using \AmS-\LaTeX}

\thanks{Corresponding author: Wen Wang, E-mail: wwen2014@mail.ustc.edu.cn}

\maketitle
\section{\textbf{Introduction}}

In the paper, we mainly derive the parabolic version gradient estimates and Harnack inequality for positive solutions
to the porous medium equation (PME for short)
\begin{equation}\label{1.1}
u_{t}=\Delta u^{m}, \quad m>1
\end{equation}
under Ricci flow.

Let $(M^{n}, g)$ be a complete Riemannian manifold. Li and Yau \cite{9}  established a famous
gradient estimate for positive solutions to the heat equation.
In 1991,
Li in \cite{10} deduced gradient estimates and Harnack inequalities for positive solutions to the nonlinear parabolic equation on
$M\times [0,\infty)$.In $1993$, Hamilton in \cite{5}  generalized the constant $\alpha$ of Li and Yau's result to the function $\alpha(t)=e^{2Kt}$.
In $2006$, Sun \cite{17} also proved gradient estimates of different coefficient.
In $2011$, Li and Xu in \cite{11} further promoted Li and Yau's result, and found  two new functions $\alpha(t)$.
Recently, first author and Zhang in \cite{18} further generalized Li and Xu's results to the nonlinear parabolic equation.
Related results can be found in \cite{4,14,19}.

In 2009,  Lu, Ni, V\'{a}zquez and Villani in \cite{13} studied the PME
on manifolds,
and obtained the results below.

\textbf{Theorem A ( Lu, Ni, V\'{a}zquez and Villani)}. \emph{Let $(\mathbf{M}^{n}, g)$ be an $n$-dimensional complete Riemannnian
manifold with $\mathbf{Ric}(B_{p}(2R))\geq -K$, $K>0$. Assume that $u$ is a positive solution of ~\eqref{1.1}. Let $v=\frac{m}{m-1}u^{m-1}$
and $M=\max_{B_{p}(2R)\times [0,T]}v$. Then for any $\alpha>1$, we have
\begin{eqnarray*}
\nonumber
\frac{|\nabla v|^2}{v}-\alpha\frac{v_t}{v}&\leq  &\frac{CMa\alpha^2}{R^2}\left(\frac{\alpha^2}{\alpha-1}am^{2}+(m-1)(1+\sqrt{K}R)\right)\\
&&+\frac{\alpha^2}{\alpha-1}a(m-1)MK+\frac{a\alpha^2}{t}
\end{eqnarray*}
on the ball $B_{p}(2R)$, where $a=\frac{n(m-1)}{n(m-1)+2}$ and the constant $C$depends only on $n$.}

\emph{Moreover, when $R\rightarrow\infty$, the following gradient estimate on complete noncaompact
Riemannian manifold $(\mathbf{M}^{n}, g)$ can be deduced:
\begin{eqnarray*}
\frac{|\nabla v|^2}{v}-\alpha\frac{v_t}{v}&\leq  &\frac{\alpha^2}{\alpha-1}a(m-1)MK+\frac{a\alpha^2}{t}
\end{eqnarray*}}

 Huang,  Huang and  Li in \cite{7} generalized the results of Lu, Ni,  V\'{a}zquez and  Villani, obtained Li-Yau type, Hamilton type
  and Li-Xu type gradient estimates.

Recently, above these results had been generalized to the Ricci flow.

The Ricci flow
\begin{equation}\label{1.2}
\partial_{t}g(x,t)=-2Ric(x,t)
\end{equation}
was first introduced by Hamilton \cite{6} to investigate the Poincar\'{e} conjecture
on compact three dimensional manifolds of positive Ricci curvature.
In 2008, Kuang and Zhang \cite{8} proved a gradient estimate for positive solutions to the conjugate heat equation under
Ricci flow on a closed manifold. Soon afterwards,   gradient estimate for positive solutions to the heat equation
under Ricci flow were further studied, one can see \cite{1,12,14,16}.
Recently, Cao and Zhu \cite{3} derived some Aronson and B\'{e}nilan estimates for PME ~\eqref{1.1} under Ricci flow.

We first introduce three $C^1$ functions $\alpha(t)$, $\varphi(t)$ and $\gamma(t)$ $: (0,+\infty)\rightarrow (0,+\infty)$.
Suppose that three $C^1$ functions $\alpha(t)$, $\varphi(t)$ and $\gamma(t)$
  satisfy the following conditions:\\
  $(C1)$ $\alpha(t)>1$, $\varphi(t)$ and $\gamma(t)$.\\
  $(C2)$ $\alpha(t)$ and $\varphi(t)$ satisfy the following system
 \begin{equation*}
\left\{\aligned
\frac{2\varphi}{n(m-1)}-2(m-1)M K\geq (\frac{2\varphi}{n(m-1)}-\alpha')\frac{1}{\alpha},\\
\frac{2\varphi}{n(m-1)}-\alpha'>0,\\
\frac{\varphi^2}{n(m-1)}+\alpha\varphi'\geq 0.
\endaligned\right.
\end{equation*}
$(C3)$ $\gamma(t)$ satisfies
 \begin{equation*}
\frac{\gamma'}{\gamma}-(\frac{2\varphi}{n(m-1)}-\alpha')\frac{1}{\alpha}\leq 0.
\end{equation*}
 $(C4)$ $\alpha(t)$ and $\gamma(t)$ are non-decreasing.

\section{\textbf{Main Results}}
 Our results state as follows.

 \begin{thm}\quad Let $(M^{n}, g(x,t))_{t\in [0,T]}$ be a complete solution to the Ricci flow ~\eqref{1.2}. Let
 $M^n$ be complete under the initial metric $g(x,0)$.
  Assume that $|\mathrm{Ric}(x,t)|\leq K$ for some $K>0$ and all $t\in [0,T]$.
  Suppose that there exist three functions $\alpha(t)$, $\varphi(t)$ and $\gamma(t)$
  satisfy  conditions (C1), (C2), (C3) and (C4).

Given $x_{0}\in M$ and $R>0$, let $u$ be a positive solution of the nonlinear parabolic equation
~\eqref{1.1}
in the cube $B_{2R,T}:=\{(x,t)|d(x,x_{0},t)\leq 2R, 0\leq t\leq T\}$. Let $v=\frac{m}{m-1}u^{m-1}$ and $v\leq M$.

If $\frac{\gamma\alpha^4}{\alpha-1}\leq C_{2}$ for some constant $C_2$,
Then for any $(x,t)\in B_{2R,T}$,
\begin{eqnarray}\label{2.1}
\nonumber
\frac{|\nabla v|^2}{v}-\alpha\frac{v_t}{v}&\leq&  Ca\alpha^{2}\left[\frac{1}{R^2}\Big(1+\sqrt{K}R\Big)+K\right]+\frac{CaM m^{2}}{R^{2}\gamma}\\
&&+\alpha^{2}K\sqrt{a(m-1)}+\frac{K\alpha^{2}}{m-1}\sqrt{an}.
\end{eqnarray}

If $\frac{\gamma}{\alpha-1}\leq C_{2}$ for some constant $C_2$,
Then for any $(x,t)\in B_{2R,T}$,
\begin{eqnarray}\label{2.2}
\nonumber
\frac{|\nabla v|^2}{v}-\alpha\frac{v_t}{v}&\leq&  Ca\alpha^{2}\left[\frac{1}{R^2}\Big(1+\sqrt{K}R\Big)+K\right]+\frac{CaM m^{2}\alpha^{4}}{R^{2}\gamma}\\
&&+\alpha^{2}K\sqrt{a(m-1)}+\frac{K\alpha^{2}}{m-1}\sqrt{an}.
\end{eqnarray}
holds on $B_{2R,T}$, where $a=\frac{n(m-1)}{n(m-1)+1}$,
and the constant $C$ depends only on $n$.
\end{thm}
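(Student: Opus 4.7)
The strategy is to run the Li--Yau/Aronson--B\'{e}nilan maximum-principle method on a suitable auxiliary function, with time weights tailored by the conditions (C1)--(C4). Under the substitution $v=\tfrac{m}{m-1}u^{m-1}$, the equation $u_{t}=\Delta u^{m}$ becomes the pressure equation
\[ v_{t} = (m-1)v\,\Delta v + |\nabla v|^{2}, \]
which is the starting point. I would then study the quantity
\[ F := \frac{|\nabla v|^{2}}{v} - \alpha(t)\,\frac{v_{t}}{v} + \varphi(t), \]
where $\varphi$ is the function from (C2), chosen precisely to absorb the lower-order Ricci and $M$-dependent terms. The goal is a max-point bound on $F$ which, after dropping $\varphi$, yields the stated inequality.

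The first computational step is to apply the linear operator $\mathcal{L}:=\partial_{t}-(m-1)v\Delta$ to $F$. The Bochner formula provides a nonnegative $|\nabla^{2}v|^{2}\ge(\Delta v)^{2}/n$ contribution, but under Ricci flow one also picks up $2\,\mathrm{Ric}(\nabla v,\nabla v)$ from $\partial_{t}g=-2\,\mathrm{Ric}$ acting on $|\nabla v|^{2}$, together with further Ricci terms from commuting $\Delta$ with $\nabla$; all of these are controlled via $|\mathrm{Ric}|\le K$. The algebraic point is that the system in (C2) is exactly what one gets after completing the square: the first inequality supplies the coefficient that keeps a quadratic in $F/v$ non-negative, the second ensures the strict lower bound $\tfrac{2\varphi}{n(m-1)}-\alpha'>0$ that will serve as the $F^{2}$-coefficient, and the third absorbs the $\alpha\varphi'$ time-derivative cost. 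The outcome is a pointwise inequality of the schematic form
\[ \mathcal{L}F \;\le\; -\,c_{0}\,\frac{F^{2}}{v} \;+\; \text{(gradient term in }\nabla F\text{)} \;+\; \text{(Ricci error)}, \qquad c_{0} \sim \tfrac{2\varphi}{n(m-1)}-\alpha'. \]

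To localize, I would use a spatial cutoff $\psi(x,t)=\eta(d(x,x_{0},t)/R)$ that is $1$ on $B_{R,T}$ and vanishes outside $B_{2R,T}$, invoking the Perelman-style Laplacian comparison and the Hamilton distance-distortion estimate under Ricci flow to obtain $|\nabla\psi|^{2}/\psi\le C/R^{2}$, $\Delta\psi\ge -C(1+\sqrt{K}R)/R^{2}$, and a comparable bound on $\psi_{t}$. Applying the parabolic maximum principle to $\gamma(t)\,\psi\,F$, and using (C3) to offset $\gamma'/\gamma$ against the $F^{2}$-coefficient and (C4) to handle monotonicity at the initial time, at a positive interior max $(x_{0},t_{0})$ one converts $\mathcal{L}F\le\cdots$ into a quadratic in $F$ of the form
\[ c_{0}\,\psi F^{2} \;\le\; Ca\alpha^{2}\!\Big[\tfrac{1}{R^{2}}(1+\sqrt{K}R)+K\Big]\,\psi F \;+\; \tfrac{CaMm^{2}}{R^{2}\gamma}\cdot(\text{power of }\alpha) \;+\; \text{(Ricci error)}. \]
Solving this quadratic by the elementary ``$AX^{2}\le BX+D\Rightarrow X\le B/A+\sqrt{D/A}$'' produces the two cases \eqref{2.1} and \eqref{2.2}: when $\gamma\alpha^{4}/(\alpha-1)\le C_{2}$ the $\alpha^{4}$ from the cutoff error is absorbed into the constant, while under the weaker hypothesis $\gamma/(\alpha-1)\le C_{2}$ the $\alpha^{4}$ factor survives explicitly in front of $m^{2}/(R^{2}\gamma)$.

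The main obstacle will be the first step. The bookkeeping for $\mathcal{L}F$ is long and must be arranged so that the precise combination $\tfrac{2\varphi}{n(m-1)}-\alpha'$ from (C2) emerges as the coefficient of $F^{2}/v$, rather than a parasitic variant; simultaneously, the disparate Ricci terms coming from $\partial_{t}g$, from commuting derivatives, and from $\mathrm{Ric}(\nabla v,\nabla v)$ in Bochner must be aggregated by completing the square against $|\nabla^{2}v|^{2}$ so as to produce exactly the two clean residuals $\alpha^{2}K\sqrt{a(m-1)}$ and $K\alpha^{2}\sqrt{an}/(m-1)$ appearing on the right of the theorem. A secondary, technical difficulty is the careful treatment of $\partial_{t}\psi$ under the evolving metric, for which Hamilton's distance-distortion lemma is essential.
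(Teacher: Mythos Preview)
Your global strategy matches the paper's: pressure substitution, auxiliary function involving $\varphi$, compute $\mathcal{L}F$, time-weight by $\gamma$, localize with a cutoff under the evolving metric, and close with a quadratic max-point inequality. However, two points in your outline misidentify the mechanism and would derail the computation if executed as written.

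First, the quantity $\tfrac{2\varphi}{n(m-1)}-\alpha'$ is \emph{not} the coefficient of the quadratic $F^{2}$ term; it is (after division by $\alpha$) the coefficient of the \emph{linear} $F$ term. In the paper the $F^{2}$ contribution appears with the fixed coefficient $1/(a\alpha^{2})$, coming from two sources: the Bochner lower bound $\sum_{i,j}\bigl(v_{ij}+\tfrac{\varphi}{n(m-1)}\delta_{ij}\bigr)^{2}\ge \tfrac{1}{n}\bigl(\Delta v+\tfrac{\varphi}{m-1}\bigr)^{2}$, and an additional term $-[(m-1)\Delta v]^{2}$ that survives the computation of $\mathcal{L}(v_{t}/v)$ (this extra square is specific to the PME structure and produces the ``$+1$'' in $a^{-1}=1+\tfrac{1}{n(m-1)}$). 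Condition (C3) is then used to make the \emph{linear} coefficient $\tfrac{\gamma'}{\gamma}-\tfrac{1}{\alpha}\bigl(\tfrac{2\varphi}{n(m-1)}-\alpha'\bigr)$ nonpositive so that it can be discarded at the maximum point; it is not used to ``offset $\gamma'/\gamma$ against the $F^{2}$-coefficient.'' Correspondingly, there is no $1/v$ in the quadratic term: one obtains $-\tfrac{1}{a\alpha^{2}}F^{2}$, not $-c_{0}F^{2}/v$.

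Second, the sign and normalization of the shift matter: the paper takes $F=\tfrac{|\nabla v|^{2}}{v}-\alpha\tfrac{v_{t}}{v}-\alpha\varphi$, not $+\varphi$. That choice is precisely what makes $(m-1)\Delta v+\varphi=-\tfrac{1}{\alpha}\bigl(F+(\alpha-1)\tfrac{|\nabla v|^{2}}{v}\bigr)$ hold, so the shifted Hessian square relates cleanly to $F$; it is also what places $\alpha'$ where the second inequality of (C2) needs it. With your $+\varphi$ this identity fails. Once these two points are corrected, the remainder of your outline (the cutoff estimates, Hamilton's distance-distortion for $\phi_{t}$, the elementary $AX^{2}-BX\le D\Rightarrow X\le B/A+\sqrt{D/A}$ step, and the dichotomy $\gamma\alpha^{4}/(\alpha-1)\le C_{2}$ versus $\gamma/(\alpha-1)\le C_{2}$) agrees with the paper.
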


Let us show some special functions to illustrate the theorem $2.1$ holds for different circumstances and
see appendix in section $5$ for detailed calculation process.

\textbf{Remarks:}
1. Li-Yau type:
\begin{align*}
&\alpha(t)=constant,\quad \varphi(t)=\frac{\alpha n(m-1)}{t}+\frac{n(m-1)^{2}MK}{\alpha-1},\\
& \gamma(t)=t^{\theta}
\quad  with \quad 0<\theta\leq 2.
\end{align*}
Then
\begin{align*}
\frac{|\nabla v|^2}{v}-\alpha\frac{v_t}{v}&\leq  Ca\alpha^{2}\left[\frac{1}{R^2}\Big(1+\sqrt{K}R\Big)+K\right]+\frac{Ca m^{2}\alpha^{4}M}{R^{2}\gamma}\\
&+\alpha^{2}K\sqrt{a(m-1)}+\frac{K\alpha^{2}}{m-1}\sqrt{an}
\end{align*}

2. Hamilton type:
$$\alpha(t)=e^{2(m-1)MKt},\varphi(t)=\frac{n(m-1)}{t}e^{4(m-1)MKt},\gamma(t)=te^{2(m-1)MKt}.$$
Then
\begin{align*}
\frac{|\nabla v|^2}{v}-\alpha\frac{v_t}{v}&\leq  Ca\alpha^{2}\left[\frac{1}{R^2}\Big(1+\sqrt{K}R\Big)+K\right]+\frac{Ca m^{2}\alpha^{4}M}{R^{2}\gamma}\\
&+\alpha^{2}K\sqrt{a(m-1)}+\frac{K\alpha^{2}}{m-1}\sqrt{an}.
\end{align*}

3. Li-Xu type:
\begin{align*}
&\alpha(t)=1+\frac{\sinh((m-1)MKt)\cosh((m-1)MKt)-(m-1)MKt}{\sinh^{2}((m-1)MKt)},\\
&\varphi(t)=2n(m-1)^{2}MK[1+\coth((m-1)MKt)],\quad \gamma(t)=\tanh((m-1)MKt).
\end{align*}
Then
\begin{align*}
\frac{|\nabla v|^2}{v}-\alpha\frac{v_t}{v}&\leq  Ca\left[\frac{1}{R^2}\Big(1+\sqrt{K}R\Big)+K\right]+\frac{Ca m^{2}M}{R^{2}\gamma}\\
&+\alpha^{2}K\sqrt{a(m-1)}+\frac{K\alpha^{2}}{m-1}\sqrt{an},
\end{align*}
where $\alpha(t)$ is bounded uniformly.

4. Linear Li-Xu type:
\begin{align*}
&\alpha(t)=1+(m-1)MKt,\quad \varphi(t)=\frac{n(m-1)}{t}+n(m-1)^{2}MK,\\
&\gamma(t)=(m-1)MKt.
\end{align*}
Then
\begin{align*}
\frac{|\nabla v|^2}{v}-\alpha\frac{v_t}{v}&\leq  Ca\alpha^{2}\left[\frac{1}{R^2}\Big(1+\sqrt{K}R\Big)+K\right]+\frac{Ca m^{2}\alpha^{4}M}{R^{2}\gamma}\\
&+\alpha^{2}K\sqrt{a(m-1)}+\frac{K\alpha^{2}}{m-1}\sqrt{an}.
\end{align*}

The local estimates above imply global estimates.
\begin{cor}
Let $(M^{n}, g(0))$ be a complete noncompact Riemannian manifold without
boundary, and asuume $g(t)$ evolves by Ricci flow in such a way that $|\mathrm{Ric}|\leq K$ for $t\in[0,T]$.
Let $u(x,t)$ be a positive solution to the equation ~\eqref{1.1}, and let $v=\frac{m}{m-1}u^{m-1}$ and $v\leq M$.

If $l\leq 1$ and for $(x,t)\in M^{n}\times (0,T]$, then
\begin{equation*}
\frac{|\nabla v|^2}{v}-\alpha\frac{v_t}{v}\leq Ca\alpha^{2}K
+\alpha^{2}K\sqrt{a(m-1)}+\frac{K\alpha^{2}}{m-1}\sqrt{an}.
\end{equation*}
\end{cor}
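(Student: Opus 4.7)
The plan is to derive Corollary 2.1 as a straightforward $R\to\infty$ limit of the local bound from Theorem 2.1, exploiting the fact that on a complete noncompact manifold there is no longer any boundary term that needs a cutoff.

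First I would fix an arbitrary basepoint $x_{0}\in M^{n}$ and an arbitrary point $(x,t)\in M^{n}\times(0,T]$. The hypotheses guarantee that $(M,g(s))$ remains complete for every $s\in[0,T]$ and that the metrics along the flow are mutually equivalent: since $|\mathrm{Ric}|\leq K$ gives $|\partial_{s}g|\leq 2K\,g$, distances stretch at most by a factor $e^{CKT}$. Consequently $d(x,x_{0},t)<\infty$, and for every sufficiently large $R$ the point $(x,t)$ lies inside the parabolic cube $B_{2R,T}$. The global hypotheses $v\leq M$ and $|\mathrm{Ric}|\leq K$ supply the same $M$ and $K$ appearing in Theorem 2.1 uniformly in $R$, so Theorem 2.1 is applicable on each such cube with constants independent of $R$.

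Next I would apply Theorem 2.1 at the fixed point $(x,t)$, using the first case (this is presumably what the condition ``$l\leq 1$'' is meant to select, namely $\frac{\gamma\alpha^{4}}{\alpha-1}\leq C_{2}$, under which the extra factor $\alpha^{4}$ on the $Mm^{2}/R^{2}\gamma$ term is absent). This yields
$$\frac{|\nabla v|^{2}}{v}-\alpha\frac{v_{t}}{v}\leq Ca\alpha^{2}\Big[\tfrac{1}{R^{2}}(1+\sqrt{K}R)+K\Big]+\frac{CaMm^{2}}{R^{2}\gamma}+\alpha^{2}K\sqrt{a(m-1)}+\frac{K\alpha^{2}}{m-1}\sqrt{an}.$$
Letting $R\to\infty$, the term $\frac{Ca\alpha^{2}(1+\sqrt{K}R)}{R^{2}}=\frac{Ca\alpha^{2}}{R^{2}}+\frac{Ca\alpha^{2}\sqrt{K}}{R}$ tends to $0$, and $\frac{CaMm^{2}}{R^{2}\gamma(t)}$ likewise vanishes since $\gamma(t)>0$ does not depend on $R$. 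The three remaining terms are precisely $Ca\alpha^{2}K+\alpha^{2}K\sqrt{a(m-1)}+\frac{K\alpha^{2}}{m-1}\sqrt{an}$, which gives the asserted global estimate.

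The only matter requiring any care, and thus the closest thing to an obstacle, is to confirm that nothing degrades in the limit: the structural conditions (C1)--(C4) on $\alpha,\varphi,\gamma$ depend on $t$ but not on $R$, and the dimensional constant $C=C(n)$ from Theorem 2.1 is likewise $R$-independent, so one indeed keeps the same estimate in the limit. Beyond these bookkeeping remarks there is no real analytic content here; the genuine work has already been carried out in the proof of Theorem 2.1.
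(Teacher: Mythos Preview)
Your proposal is correct and matches the paper's approach: the paper states only that ``The local estimates above imply global estimates'' before Corollary~2.1, so the intended argument is precisely the $R\to\infty$ limit of Theorem~2.1 that you carry out.
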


\section{\textbf{Preliminary}}
Let $v=\frac{m}{m-1}u^{m-1}$ and put into equation (1.1), we get
\begin{equation}\label{3.1}
v_{t}=(m-1)v\Delta v +|\nabla v|^{2},
\end{equation}
which is equivalent to the following form:
\begin{equation}\label{3.2}
\frac{v_{t}}{v}=(m-1)\Delta v +\frac{|\nabla v|^{2}}{v}.
\end{equation}

\begin{lem}
Assume that $(M^{n}, g(x,t))$ satisfies the hypotheses of Theorem $2.1$. We introduce the differential operator
\begin{equation*}
\mathcal{L}=\partial_{t}-(m-1)v\Delta .
\end{equation*}
Let $F=\frac{|\nabla v|^{2}}{v}-\alpha \frac{v_t}{v}-\alpha\varphi$, where $\alpha=\alpha(t)>1$. Then we have
\begin{eqnarray}\label{3.3}
\nonumber
\mathcal{L}(F)&\leq&-(m-1)\sum_{i,j}^{n} v^{2}_{ij}+(m-1)\alpha^{2}K^{2}+2(m-1)K|\nabla v|^{2}+2m\nabla v\nabla F\\
&&-[(m-1)\Delta v]^{2}+2(\alpha-1)K\frac{|\nabla v|^2}{v}-\alpha'\frac{v_t}{v}-\alpha'\varphi-\alpha\varphi'.
\end{eqnarray}
\end{lem}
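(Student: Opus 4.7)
My plan is to compute $\mathcal{L}(F)$ by applying $\mathcal{L}$ to each of the three pieces in $F=|\nabla v|^2/v-\alpha v_t/v-\alpha\varphi$ and combining them. The only substantive inputs will be the Bochner formula, the Ricci flow identity $\partial_t g^{ij}=2R^{ij}$, and the PME identity \eqref{3.1}.

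I would first handle $\mathcal{L}(|\nabla v|^2/v)$. Under Ricci flow one has
\[
\partial_t|\nabla v|^2=2\,\mathrm{Ric}(\nabla v,\nabla v)+2\langle\nabla v,\nabla v_t\rangle,
\]
while Bochner gives $\Delta|\nabla v|^2=2\sum_{i,j}v_{ij}^2+2\langle\nabla v,\nabla\Delta v\rangle+2\,\mathrm{Ric}(\nabla v,\nabla v)$. Substituting $\nabla v_t$ from \eqref{3.1}, and then converting from $|\nabla v|^2$ to $|\nabla v|^2/v$ via the corrected product rule
\[
\mathcal{L}(fg)=g\mathcal{L}(f)+f\mathcal{L}(g)-2(m-1)v\langle\nabla f,\nabla g\rangle
\]
together with $\mathcal{L}(1/v)=-(2m-1)|\nabla v|^2/v^2$ (immediate from $\mathcal{L}(v)=|\nabla v|^2$, a restatement of \eqref{3.1}), I obtain an expression for $\mathcal{L}(|\nabla v|^2/v)$ whose leading quadratic piece is proportional to $-\sum v_{ij}^2$, together with Ricci terms and first-order gradient products.

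Next I would compute $\mathcal{L}(v_t/v)$. Since $v_t/v=(m-1)\Delta v+|\nabla v|^2/v$, it suffices to evaluate $\mathcal{L}(\Delta v)$. The Ricci flow commutator $\partial_t\Delta v=\Delta v_t+2R^{ij}v_{ij}$, together with a second application of Bochner on $\Delta v_t=\Delta[(m-1)v\Delta v+|\nabla v|^2]$, makes the $\Delta^2 v$ terms cancel --- this is the whole point of choosing the coefficient $(m-1)v$ in $\mathcal{L}$ --- yielding
\[
\mathcal{L}(\Delta v)=(m-1)(\Delta v)^2+2\sum v_{ij}^2+2m\langle\nabla v,\nabla\Delta v\rangle+2\,\mathrm{Ric}(\nabla v,\nabla v)+2R^{ij}v_{ij}.
\]
The explicit time-dependence of $\alpha(t)$ and $\varphi(t)$ then produces the $-\alpha' v_t/v-\alpha'\varphi-\alpha\varphi'$ contributions to $\mathcal{L}(F)$ via the chain rule.

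To assemble, I would combine the pieces with the coefficients dictated by $F$; use $\alpha>1$ to pass from the natural $-\alpha(m-1)^2(\Delta v)^2$ to the sharper $-[(m-1)\Delta v]^2$ of \eqref{3.3}; and bound the Ricci contributions via $|\mathrm{Ric}|\le K$ and Cauchy-Schwarz against a fraction of the $\sum v_{ij}^2$ term to extract the three curvature errors $(m-1)\alpha^2K^2$, $2(m-1)K|\nabla v|^2$, and $2(\alpha-1)K|\nabla v|^2/v$. I expect the main obstacle to be the bookkeeping in this last step: the mixed gradient terms $\langle\nabla v,\nabla\Delta v\rangle$ and $\langle\nabla v,\nabla|\nabla v|^2\rangle$ must be reorganized, via $\nabla F=\nabla(|\nabla v|^2/v)-\alpha\nabla(v_t/v)$, into the single advective remainder $2m\langle\nabla v,\nabla F\rangle$ appearing on the right of \eqref{3.3}. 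Because $\mathcal{L}$ is not a derivation, the corrected product rule above must be applied consistently throughout.
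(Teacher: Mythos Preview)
Your proposal is correct and follows essentially the same strategy as the paper's proof. The paper differs only in bookkeeping: it computes $\partial_t$ and $(m-1)v\Delta$ of $v_t/v$ and $|\nabla v|^2/v$ separately (rather than via your product rule and the auxiliary $\mathcal{L}(\Delta v)$), reorganizes the first-order terms into $2m\,\nabla v\cdot\nabla F$ exactly as you anticipate, discards $(1-\alpha)(v_t/v)^2\le 0$ at the $\alpha>1$ step (rather than your $(1-\alpha)(m-1)^2(\Delta v)^2$), and finishes with Young's inequality $|R_{ij}v_{ij}|\le\tfrac{\alpha}{2}R_{ij}^2+\tfrac{1}{2\alpha}v_{ij}^2$ on the Hessian--Ricci cross term.
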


\begin{proof}\quad Simple calculation shows
\begin{eqnarray}\label{3.4}
\nonumber
\partial_{t}\left(\frac{v_t}{v}\right)&=& \partial_{t}\left[(m-1)\Delta v+\frac{|\nabla v|^{2}}{v}\right]\\
\nonumber
&=&(m-1)(\Delta v)_{t}+\frac{2\nabla v \nabla v_{t}}{v}+\frac{2}{v}\mathrm{Ric}(\nabla v,\nabla v)-\frac{|\nabla v|^{2}v_t}{v^2}\\
\nonumber
&=&(m-1)(\Delta v)_{t}+\frac{2\nabla v}{v}\nabla\Big[(m-1)v\Delta v +|\nabla v|^{2}\Big]\\
\nonumber
&&-\frac{|\nabla v|^{2}}{v}\Big[(m-1)v\Delta v +|\nabla v|^{2}\Big]+\frac{2}{v}\mathrm{Ric}(\nabla v,\nabla v)\\
\nonumber
&=&(m-1)(\Delta v)_{t}+(m-1)\frac{|\nabla v|^{2}\Delta v}{v}+2(m-1)\nabla v\nabla(\Delta v)\\
&&+\frac{2\nabla v\nabla|\nabla v|^{2}}{v}
+\frac{(m-1)\lambda|\nabla v|^{2}}{v}-\frac{|\nabla v|^4}{v^2}+\frac{2}{v}\mathrm{Ric}(\nabla v,\nabla v),
\end{eqnarray}
and
\begin{eqnarray}\label{3.5}
\Delta\left(\frac{v_t}{v}\right)&=& \frac{\Delta(v_t)+2\sum_{i,j}R_{ij}f_{ij}}{v}-\frac{2\nabla v\nabla v_t}{v^2}-\frac{v_{t}\Delta v}{v^2}+\frac{2|\nabla v|^{2}v_{t}}{v^3},
\end{eqnarray}
where we use the fact that
\begin{eqnarray}\label{3.6}
(\Delta v)_{t}=\Delta(v_{t})+2\sum_{i,j}^{n}R_{ij}v_{ij},
\end{eqnarray}
\begin{eqnarray}\label{3.7}
(|\nabla v|^2)_{t}=2\nabla v\nabla(v_t)+2Ric(\nabla v, v).
\end{eqnarray}
Combining \eqref{3.4} and \eqref{3.5}, we have
\begin{eqnarray}\label{3.8}
\nonumber
\mathcal{L}\left(\frac{v_t}{v}\right)&=& (m-1)\frac{|\nabla v|^{2}\Delta v}{v}+2(m-1)\nabla v\nabla(\Delta v)+\frac{2\nabla v\nabla|\nabla v|^2}{v}\\
\nonumber
&& -\frac{|\nabla v|^4}{v^2}
+2(m-1)\frac{\nabla v\nabla v_t}{v}+(m-1)\frac{v_{t}\Delta v}{v}-2(m-1)\frac{|\nabla v|^{2}v_t}{v^2}\\
&&+2(m-1)\sum_{i,j}R_{ij}v_{ij}+\frac{2}{v}\mathrm{Ric}(\nabla v,\nabla v).
\end{eqnarray}
Since
\begin{align*}
\nabla v_{t}=&(m-1)\nabla v\Delta v+(m-1)v\nabla(\Delta v)+\nabla|\nabla v|^{2},
\end{align*}
then
\begin{align*}
&2(m-1)\nabla v\nabla(\Delta v)+\frac{2\nabla v\nabla|\nabla v|^2}{v}+(m-1)\frac{|\nabla v|^{2}\Delta v}{v}-\frac{|\nabla v|^4}{v^2}\\
&=\frac{2}{v}\nabla v \nabla v_{t} -\frac{|\nabla v|^{2}}{v}\frac{v_t}{v}.
\end{align*}
Substituting above identity and the following identity
\begin{equation*}
2(m-1)\frac{\nabla v\nabla v_t}{v}-2(m-1)\frac{|\nabla v|^{2}v_{t}}{v^2}~=~2(m-1)v\nabla\left(\frac{v_t}{v}\right)\nabla\log v
\end{equation*}
into \eqref{3.8}, we have
\begin{eqnarray}\label{3.9}
\nonumber
\mathcal{L}\left(\frac{v_t}{v}\right)&=&\frac{2}{v}\nabla v \nabla v_{t} -\frac{|\nabla v|^{2}}{v}\frac{v_t}{v}
+2(m-1)v\nabla\left(\frac{v_t}{v}\right)\nabla\log v\\
&& +(m-1)\frac{v_{t}\Delta v}{v}++2(m-1)\sum_{i,j}R_{ij}v_{ij}+\frac{2}{v}\mathrm{Ric}(\nabla v,\nabla v).
\end{eqnarray}
On the other hand, similar calculations show
\begin{eqnarray}\label{3.10}
\nonumber
\partial_{t}\left(\frac{|\nabla v|^2}{v}\right)&=& \frac{2\nabla v}{v}\nabla\Big[(m-1)v\Delta v+|\nabla v|^{2}\Big]
+\frac{2Ric(\nabla v,\nabla v)}{v}\\
\nonumber
&&-\frac{|\nabla v|^2}{v}\left[(m-1)\Delta v+\frac{|\nabla v|^2}{v}+\right]\\
\nonumber
&=&2(m-1)\Delta v\frac{|\nabla v|^2}{v}+2(m-1)\nabla v\nabla(\Delta v)+\frac{2}{v}\nabla v\nabla|\nabla v|^{2}\\
&&-(m-1)\Delta v\frac{|\nabla v|^2}{v}-\frac{|\nabla v|^4}{v^2}+\frac{2Ric(\nabla v,\nabla v)}{v},
\end{eqnarray}
where we utilize the formula \eqref{3.7} in \eqref{3.10}. \\
By utilize Bochner's formula, we have
\begin{eqnarray}\label{3.11}
\nonumber
\Delta\left(\frac{|\nabla v|^2}{v}\right)&=& \frac{2v^{2}_{ij}}{v}+\frac{2\nabla v\Delta(\nabla v)}{v}-
2\frac{\nabla v\nabla|\nabla v|^2}{v^2}-\Delta v\frac{|\nabla v|^{2}}{v^2}+\frac{2|\nabla v|^4}{v^3}\\
\nonumber
&=&\frac{2v^{2}_{ij}}{v}+2R_{ij}\frac{|\nabla v|^{2}}{v}+\frac{2}{v}\nabla v\nabla(\Delta v)
-\Delta v\frac{|\nabla v|^2}{v^2}\\
&&-2\nabla\left(\frac{|\nabla v|^2}{v}\right)\nabla(\log v).
\end{eqnarray}
From  \eqref{3.10} and \eqref{3.11}, we obtain
\begin{eqnarray}\label{3.12}
\nonumber
\mathcal{L}\left(\frac{|\nabla v|^2}{v}\right)&=&2(m-1)\Delta v\frac{|\nabla v|^2}{v}+\frac{2}{v}\nabla v\nabla|\nabla v|^{2}
 -2(m-1)R_{ij}|\nabla v|^{2}\\
 \nonumber
 &&-2(m-1)v^{2}_{ij}
 -\frac{|\nabla v|^4}{v^2}+2(m-1)v\nabla\left(\frac{|\nabla v|^2}{v}\right)\nabla(\log v)\\
 &&+\frac{2Ric(\nabla v,\nabla v)}{v}.
\end{eqnarray}
By utilize \eqref{3.9} and \eqref{3.12}, we have
\begin{eqnarray}\label{3.13}
\nonumber
&&\mathcal{L}(F)=\mathcal{L}\left(\frac{|\nabla v|^2}{v}\right)
-\alpha\mathcal{L}\left(\frac{v_t}{v}\right)-\alpha'\frac{v_t}{v}-\alpha'\varphi-\alpha\varphi'\\
\nonumber
&=&
2(m-1)\Delta v\frac{|\nabla v|^2}{v}+\frac{2}{v}\nabla v\nabla|\nabla v|^{2}
 -2(m-1)(v^{2}_{ij}+\alpha R_{ij}v_{ij})\\
\nonumber
&&-2(m-1)R_{ij}|\nabla v|^{2}
-\frac{|\nabla v|^4}{v^2}+2(m-1)v\nabla\left(\frac{|\nabla v|^2}{v}\right)\nabla(\log v)\\
\nonumber
&&
-\alpha\frac{2}{v}\nabla v\nabla v_{t}+\alpha\frac{|\nabla v|^2}{v}\frac{v_t}{v}
-2\alpha(m-1)v\nabla\left(\frac{v_t}{v}\right)\nabla(\log v)\\
&&
-\alpha(m-1)\Delta v\frac{v_t}{v}-\frac{2(\alpha-1)}{v}\mathrm{Ric}(\nabla v,\nabla v)
-\alpha'\varphi-\alpha\varphi'.
\end{eqnarray}
It is not difficult to calculate that
\begin{eqnarray}\label{3.14}
\nonumber
&&2(m-1)v\nabla\left(\frac{|\nabla v|^2}{v}\right)\nabla(\log v)-2\alpha(m-1)\nabla\left(\frac{v_t}{v}\right)\nabla(\log v)\\
\nonumber
&=&2(m-1)\nabla v \nabla\left[\frac{|\nabla v|^2}{v}-\alpha\frac{v_t}{v}\right]\\
&=&2(m-1)\nabla v\nabla F,
\end{eqnarray}
and
\begin{eqnarray}\label{3.15}
\nonumber
\frac{2}{v}\nabla v\nabla|\nabla v|^{2}-\alpha \frac{2}{v}\nabla v\nabla v_{t}
&=&\frac{2}{v}\nabla v\nabla(|\nabla v|^{2}-\alpha  v_{t})\\
\nonumber
&=&\frac{2}{v}\nabla v\nabla (Fv)\\
&=&2\nabla v\nabla F+2F\frac{|\nabla v|^2}{v}.
\end{eqnarray}
We deduce from \eqref{3.14} and \eqref{3.15} that
\begin{eqnarray}\label{3.16}
\nonumber
&&2(m-1)v\nabla\left(\frac{|\nabla v|^2}{v}\right)\nabla(\log v)-2\alpha(m-1)\nabla\left(\frac{v_t}{v}\right)\nabla(\log v)\\
\nonumber
&&+\frac{2}{v}\nabla v\nabla|\nabla v|^{2}-\alpha \frac{2}{v}\nabla v\nabla v_{t}\\
\nonumber
&=&2m\nabla v\nabla F+2F\frac{|\nabla v|^2}{v}\\
&=&2m\nabla v\nabla F+2\left(\frac{|\nabla v|^2}{v}-\alpha\frac{v_t}{v}\right)\frac{|\nabla v|^2}{v},
\end{eqnarray}
and
\begin{eqnarray}\label{3.17}
\nonumber
&&2(m-1)\Delta v\frac{|\nabla v|^2}{v}-\frac{|\nabla v|^4}{v^2}-\alpha(m-1)\Delta v\frac{v_t}{v}
+\alpha\frac{v_t}{v}\frac{|\nabla v|^2}{v}\\
\nonumber
&=&2\frac{|\nabla v|^2}{v}\left(\frac{v_t}{v}-\frac{|\nabla v|^2}{v}\right)-\frac{|\nabla v|^4}{v^2}-\alpha\frac{v_t}{v}\left(\frac{v_t}{v}-\frac{|\nabla v|^2}{v}\right)+\alpha\frac{v_t}{v}\frac{|\nabla v|^2}{v}\\
&=&(2\alpha+2)\frac{v_t}{v}\frac{|\nabla v|^2}{v}-3\frac{|\nabla v|^4}{v^2}-\alpha\left(\frac{v_t}{v}\right)^{2}.
\end{eqnarray}
From \eqref{3.16} and \eqref{3.17}, we have
\begin{eqnarray}\label{3.18}
\nonumber
&&2(m-1)v\nabla\left(\frac{|\nabla v|^2}{v}\right)\nabla(\log v)-2\alpha(m-1)\nabla\left(\frac{v_t}{v}\right)\nabla(\log v)
+\frac{2}{v}\nabla v\nabla|\nabla v|^{2}\\
\nonumber
 &&-\alpha \frac{2}{v}\nabla v\nabla v_{t}+2(m-1)\Delta v\frac{|\nabla v|^2}{v}-\frac{|\nabla v|^4}{v^2}-\alpha(m-1)\Delta v\frac{v_t}{v}+\alpha\frac{v_t}{v}\frac{|\nabla v|^2}{v}\\
 \nonumber
&=&2m\nabla v\nabla F-\left(\frac{v_t}{v}-\frac{|\nabla v|^2}{v}\right)^{2}+(1-\alpha)\left(\frac{v_t}{v}\right)^{2}\\
&\leq&2m\nabla v\nabla F-[(m-1)\Delta v]^{2}\quad for \quad \alpha>1.
\end{eqnarray}
Substituting  \eqref{3.18} into \eqref{3.13}, we arrive at
\begin{align}\label{3.19}
\nonumber
\mathcal{L}(F)\leq&-2(m-1)(v^{2}_{ij}+\alpha R_{ij}v_{ij})-2(m-1)R_{ij}|\nabla v|^{2}\\
&+2m\nabla v\nabla F-[(m-1)\Delta v]^{2}
-2(\alpha-1)R_{ij}\frac{|\nabla v|^2}{v}-\alpha'\varphi-\alpha\varphi'.
\end{align}
Further, applying Young's inequality
\begin{eqnarray*}
|R_{ij}||v_{ij}|\leq \frac{\alpha}{2}R^{2}_{ij}+\frac{1}{2\alpha}v^{2}_{ij}
\end{eqnarray*}
 to (3.19), we conclude
We complete the proof of Lemma $3.1$.
\end{proof}

\begin{lem} Suppose that $(M^{n}, g(t))_{t\in[0, T]}$ satisfies the hypotheses of Theorem $2.1$.
We also assume that $\alpha(t)>1$ and $\varphi(t)>0$ satisfy the following system
\begin{equation}\label{3.20}
\left\{\aligned
\frac{2\varphi}{n(m-1)}-2(m-1)M K\geq (\frac{2\varphi}{n(m-1)}-\alpha')\frac{1}{\alpha},\\
\frac{2\varphi}{n(m-1)}-\alpha'>0,\\
\frac{\varphi^2}{n(m-1)}+\alpha\varphi'\geq 0,
\endaligned\right.
\end{equation}
and $\alpha(t)$ is non-decreasing.
Then
\begin{align}\label{3.21}
\nonumber
\mathcal{L} F\leq& -(m-1)\sum_{i,j}^{n} \left[v_{ij}+\frac{\varphi}{n(m-1)}\delta_{ij}\right]^{2}
-\left[\frac{2\varphi}{n(m-1)}-\alpha'\right]\frac{1}{\alpha}F\\
&+(m-1)\alpha^{2}K^{2}+2(\alpha-1)K\frac{|\nabla v|^{2}}{v}+2m\nabla v\nabla F
-[(m-1)\Delta v]^{2}.
\end{align}
\end{lem}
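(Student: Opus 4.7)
The plan is to reorganise the right-hand side of the Lemma~3.1 estimate \eqref{3.3} so that (i) the Hessian term $-(m-1)\sum v_{ij}^{2}$ becomes a full square centred at the trace correction $-\varphi/(n(m-1))\,\delta_{ij}$, and (ii) every remaining occurrence of $v_{t}/v$ and $\Delta v$ is routed through $F$ and $|\nabla v|^{2}/v$. The three-part system \eqref{3.20} is tailored precisely to make the leftover first- and zeroth-order terms nonpositive.

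First I would combine the PME form \eqref{3.2} with the definition of $F$ to get the two substitution identities
\begin{equation*}
(m-1)\Delta v = -\frac{\alpha-1}{\alpha}\,\frac{|\nabla v|^{2}}{v} - \varphi - \frac{F}{\alpha},\qquad -\alpha'\frac{v_{t}}{v} = -\frac{\alpha'}{\alpha}\,\frac{|\nabla v|^{2}}{v} + \frac{\alpha'}{\alpha}F + \alpha'\varphi.
\end{equation*}
The $\alpha'\varphi$ in the second identity cancels the stray $-\alpha'\varphi$ in \eqref{3.3}. In parallel, expanding the square (with $\sum_{i}v_{ii}=\Delta v$ and $\sum_{i,j}\delta_{ij}^{2}=n$) yields
\begin{equation*}
-(m-1)\sum_{i,j}v_{ij}^{2} = -(m-1)\sum_{i,j}\Bigl[v_{ij}+\tfrac{\varphi}{n(m-1)}\delta_{ij}\Bigr]^{2} + \tfrac{2\varphi}{n}\Delta v + \tfrac{\varphi^{2}}{n(m-1)}.
\end{equation*}
Feeding the expression for $(m-1)\Delta v$ into the $\tfrac{2\varphi}{n}\Delta v$ piece splits it into a $-\tfrac{2\varphi}{n(m-1)\alpha}F$ contribution (which combines with $\tfrac{\alpha'}{\alpha}F$ to produce exactly the target coefficient $-\bigl[\tfrac{2\varphi}{n(m-1)}-\alpha'\bigr]\tfrac{F}{\alpha}$), a gradient piece $-\tfrac{2\varphi(\alpha-1)}{n(m-1)\alpha}|\nabla v|^{2}/v$, and a constant $-\tfrac{2\varphi^{2}}{n(m-1)}$ which pairs with the leftover $+\tfrac{\varphi^{2}}{n(m-1)}$ to leave $-\tfrac{\varphi^{2}}{n(m-1)}$.

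The hard part will be absorbing the remaining gradient and constant residues so that only the four terms on the right of \eqref{3.21} survive. Using the hypothesis $v\le M$ I would bound $|\nabla v|^{2} \le M\,|\nabla v|^{2}/v$, turning the $+2(m-1)K|\nabla v|^{2}$ of \eqref{3.3} into $+2(m-1)KM\,|\nabla v|^{2}/v$. Summing every coefficient of $|\nabla v|^{2}/v$ beyond the $2(\alpha-1)K$ retained in the target gives
\begin{equation*}
\Bigl[2(m-1)KM-\tfrac{2\varphi(\alpha-1)}{n(m-1)\alpha}-\tfrac{\alpha'}{\alpha}\Bigr]\frac{|\nabla v|^{2}}{v},
\end{equation*}
which after multiplication by $\alpha$ is nonpositive by the first inequality of \eqref{3.20}; this is the precise reason why (C2) involves both the bulk bound $M$ and the curvature $K$. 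The residual constant $-\tfrac{\varphi^{2}}{n(m-1)}-\alpha\varphi'$ is nonpositive by the third inequality of \eqref{3.20}. The second inequality of \eqref{3.20} is not invoked here, but it is exactly what later guarantees that the $F$-coefficient in \eqref{3.21} is strictly negative, which will be essential for the maximum-principle argument in Theorem~2.1. Substituting the reorganised terms into \eqref{3.3} then yields \eqref{3.21}.
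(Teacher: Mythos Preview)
Your proposal is correct and follows essentially the same route as the paper: complete the square in the Hessian term of \eqref{3.3}, bound $|\nabla v|^{2}\le M\,|\nabla v|^{2}/v$, and then eliminate $\Delta v$ and $v_{t}/v$ in favour of $F$ and $|\nabla v|^{2}/v$ so that the three conditions of \eqref{3.20} absorb the residual gradient and constant terms. The only cosmetic difference is that the paper first converts $\tfrac{2\varphi}{n}\Delta v$ via \eqref{3.2} into $v_{t}/v$- and $|\nabla v|^{2}/v$-pieces and then regroups the $v_{t}/v$-coefficient into $F$, whereas you substitute your identity $(m-1)\Delta v=-\tfrac{\alpha-1}{\alpha}|\nabla v|^{2}/v-\varphi-\tfrac{F}{\alpha}$ directly; the resulting coefficients are identical.
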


\begin{proof}
By utilizing the unit matrix $(\delta_{ij})_{n\times n}$ and $(3.3)$, we obtain
\begin{eqnarray*}
\nonumber
\mathcal{L}(F)&\leq&-(m-1)\sum_{i,j}^{n} \Big[v^{2}_{ij}+\frac{\varphi}{n(m-1)}\delta^{2}_{ij}\Big]
+\frac{\varphi^2}{n(m-1)}+\frac{2\varphi}{n}\Delta v\\
&&+(m-1)\alpha^{2}K^{2}+2(m-1)KM\frac{|\nabla v|^{2}}{v}+2m\nabla v\nabla F\\
&&-[(m-1)\Delta v]^{2}+2(\alpha-1)K\frac{|\nabla v|^2}{v}-\alpha'\frac{v_t}{v}-\alpha'\varphi-\alpha\varphi'.
\end{eqnarray*}
Applying (3.2) to above inequality, we have
\begin{eqnarray*}
\nonumber
\mathcal{L}(F)&\leq&-(m-1)\sum_{i,j}^{n} \Big[v^{2}_{ij}+\frac{\varphi}{n(m-1)}\delta^{2}_{ij}\Big]
-\Big[\frac{2\varphi}{n(m-1)}-2(m-1)M K\Big]\frac{|\nabla v|^{2}}{v}\\
&&+\Big[\frac{2\varphi}{n(m-1)}-\alpha'\Big]\frac{v_t}{v}
+\Big[\frac{2\varphi}{n(m-1)}-\alpha'\Big]\frac{\alpha \varphi}{\alpha}
+\frac{\varphi^2}{n(m-1)}\\
&&-\Big[\frac{2\varphi}{n(m-1)}-\alpha'\Big]\frac{\alpha \varphi}{\alpha}+(m-1)\alpha^{2}K^{2}+2(m-1)KM\frac{|\nabla v|^{2}}{v}\\
&&+2m\nabla v\nabla F-[(m-1)\Delta v]^{2}+2(\alpha-1)K\frac{|\nabla v|^2}{v}-\alpha'\varphi-\alpha\varphi'.
\end{eqnarray*}
Again using (3.20), we follows (3.21).
\end{proof}

\begin{lem}
Let $G=\gamma(t)F$. Then
\begin{eqnarray}\label{3.22}
\nonumber
\mathcal{L} G&\leq &-\frac{1}{a\alpha^{2}\gamma}G^{2}
+\left[\frac{\gamma'}{\gamma}-\left(\frac{2\varphi}{n(m-1)}-\alpha'\right)\frac{1}{\alpha}\right]G\\
\nonumber
&&-\frac{2(\alpha-1)}{n\alpha^{2}}\frac{|\nabla v|^2}{v}G-\frac{\gamma(m-1)(\alpha-1)^2}{n\alpha^2}\frac{|\nabla v|^4}{v^2}
\\
&&+(m-1)\alpha^{2}\gamma K^{2}+2\gamma(\alpha-1)K\frac{|\nabla v|^{2}}{v}
+2m\nabla v\nabla G,
\end{eqnarray}
where $a=\frac{n(m-1)}{n(m-1)+1}$.
\end{lem}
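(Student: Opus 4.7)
The plan is to start from $G=\gamma F$ and expand $\mathcal{L}G$ by the product rule, since $\gamma$ depends only on $t$ while $\mathcal{L}=\partial_t-(m-1)v\Delta$; this gives $\mathcal{L}G=\gamma'F+\gamma\,\mathcal{L}F$. Plugging in the bound from Lemma~3.2 immediately transforms three of the four linear terms in $F$ into the advertised form: $\gamma'F=\tfrac{\gamma'}{\gamma}G$, the term $-\gamma\big[\tfrac{2\varphi}{n(m-1)}-\alpha'\big]F/\alpha$ becomes $-\big[\tfrac{2\varphi}{n(m-1)}-\alpha'\big]G/\alpha$, and $2m\gamma\nabla v\cdot\nabla F=2m\nabla v\cdot\nabla G$. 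The constant-in-$F$ pieces $(m-1)\gamma\alpha^2K^2$ and $2\gamma(\alpha-1)K\,|\nabla v|^2/v$ carry over unchanged.

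The main work is then to extract the dominant negative term $-G^2/(a\alpha^2\gamma)$ from the two squared Hessian-type terms $-\gamma(m-1)\sum[v_{ij}+\tfrac{\varphi}{n(m-1)}\delta_{ij}]^2$ and $-\gamma[(m-1)\Delta v]^2$. For the first, I would apply the Cauchy--Schwarz (trace) inequality $\sum A_{ij}^2\ge (\mathrm{tr}\,A)^2/n$ to $A_{ij}=v_{ij}+\tfrac{\varphi}{n(m-1)}\delta_{ij}$, obtaining the scalar lower bound $\tfrac{1}{n(m-1)}[(m-1)\Delta v+\varphi]^2$. Setting $X=(m-1)\Delta v$ and combining with $-\gamma X^2$ yields
\[
-\gamma(m-1)\!\sum\!\Big[v_{ij}+\tfrac{\varphi}{n(m-1)}\delta_{ij}\Big]^{2}-\gamma X^{2}\le -\tfrac{\gamma X^{2}}{a}-\tfrac{2\gamma\varphi X}{n(m-1)}-\tfrac{\gamma\varphi^{2}}{n(m-1)},
\]
where $a=\tfrac{n(m-1)}{n(m-1)+1}$ appears precisely because $1+\tfrac{1}{n(m-1)}=\tfrac{1}{a}$.

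Next I would use equation~(3.2) together with the definitions $F=|\nabla v|^{2}/v-\alpha v_t/v-\alpha\varphi$ and $G=\gamma F$ to solve for $X$: a direct computation gives $X=-\tfrac{G}{\alpha\gamma}-\tfrac{\alpha-1}{\alpha}\tfrac{|\nabla v|^2}{v}-\varphi$. Substituting this expression into the scalar quadratic from the previous step and expanding carefully produces the dominant piece $-G^{2}/(a\alpha^{2}\gamma)$ together with the cross terms in $G\,|\nabla v|^2/v$ and $|\nabla v|^4/v^{2}$ stated in~\eqref{3.22}, and additionally a cluster of $\varphi$-terms. Those extra $\varphi$-terms are all nonpositive in magnitude (they can be grouped and bounded using the sign hypotheses (C1)--(C2) on $\alpha,\varphi$), so they may be discarded in passing to an upper bound. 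Assembling the surviving pieces gives the claimed inequality.

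The bookkeeping in the substitution and expansion is the only real obstacle: one has to track sign conventions and verify that the $\varphi$-cross-terms can indeed be discarded (or absorbed into the $[\gamma'/\gamma-(\cdots)/\alpha]G$ term) under the assumed positivity and monotonicity conditions. Once that is handled, the rest is mechanical and the inequality \eqref{3.22} follows.
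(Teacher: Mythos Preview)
Your proposal is correct and follows the paper's argument in structure: product rule for $\mathcal{L}G$, insert Lemma~3.2, apply the trace inequality to the Hessian square, and rewrite $(m-1)\Delta v$ in terms of $F$ via~(3.2). The one tactical difference is that the paper does \emph{not} combine the two squared terms before substituting. Instead it observes directly that the trace of $v_{ij}+\tfrac{\varphi}{n(m-1)}\delta_{ij}$ equals $\Delta v+\tfrac{\varphi}{m-1}=-\tfrac{1}{\alpha(m-1)}\bigl[F+(\alpha-1)\tfrac{|\nabla v|^2}{v}\bigr]$, so the $\varphi$ vanishes from the Hessian term for free; for the remaining term $-\gamma[(m-1)\Delta v]^2$ it uses the crude bound $(m-1)\Delta v=-\tfrac{F}{\alpha}-\tfrac{\alpha-1}{\alpha}\tfrac{|\nabla v|^2}{v}-\varphi\le -\tfrac{F}{\alpha}$ (valid where $F>0$, which is the only case of interest). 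This sidesteps entirely the ``cluster of $\varphi$-terms'' you plan to expand and then discard. Your route works too---the leftover terms $-\tfrac{2\varphi G}{\alpha}$, $-\tfrac{2\gamma\varphi(\alpha-1)}{\alpha}\tfrac{|\nabla v|^2}{v}$, $-\gamma\varphi^2$ are indeed all nonpositive when $G>0$---but the paper's split is shorter and makes the absence of $\varphi$ in the final inequality transparent rather than the result of a cancellation check.
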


\begin{proof} Sample calculation gives
\begin{eqnarray}\label{3.23}
\nonumber
\mathcal{L} G&=&\gamma\mathcal{L}F+\gamma' F\\
\nonumber
&\leq &-(m-1)\gamma\left[v^{2}_{ij}+\frac{\varphi}{n(m-1)}\delta^{2}_{ij}\right]
+\left[-\left(\frac{2\varphi}{n(m-1)}-\alpha'\right)\frac{1}{\alpha}+\frac{\gamma'}{\gamma}\right]G\\
\nonumber
&&+(m-1)\alpha^{2}\gamma K^{2}+2\gamma(\alpha-1)K\frac{|\nabla v|^{2}}{v}
+2m\nabla v\nabla G\\
&&-\gamma[(m-1)\Delta v]^{2}.
\end{eqnarray}
Since
\begin{align}\label{3.24}
\nonumber
\left[v^{2}_{ij}+\frac{\varphi}{n(m-1)}\delta^{2}_{ij}\right]&\geq \frac{1}{n}(\Delta v+\varphi)\\
&=\frac{1}{n\alpha^{2}(m-1)^2}\left[F+(m-1)(\alpha-1)\frac{|\nabla v|^2}{v}\right]^{2},
\end{align}
and
\begin{eqnarray}\label{3.25}
\nonumber
(m-1)\Delta v&=&-\frac{F}{\alpha}-\frac{\alpha-1}{\alpha}\frac{|\nabla v|^{2}}{v}-\varphi\\
&\leq &-\frac{F}{\alpha}.
\end{eqnarray}
Therefore, we follow that from (3.23), (3.24) and (3.25)
\begin{eqnarray}\label{3.26}
\nonumber
\mathcal{L} G&\leq &-\frac{\gamma}{n\alpha^{2}(m-1)}\left[F+(m-1)(\alpha-1)\frac{|\nabla v|^2}{v}\right]^{2}\\
\nonumber
&&
+\left[-\left(\frac{2\varphi}{n(m-1)}-\alpha'\right)\frac{1}{\alpha}+\frac{\gamma'}{\gamma}\right]G+(m-1)\alpha^{2}\gamma K^{2}\\
&&+2\gamma(\alpha-1)K\frac{|\nabla v|^{2}}{v}
+2m\nabla v\nabla G
-\frac{G^2}{\alpha^{2}\gamma}.
\end{eqnarray}
From (3.26), we infer (3.22). The proof is complete.
\end{proof}

\section{\textbf{Proof of Main Results}}
In this section, we will prove our main results.

\begin{proof}[\textbf{Proof of Theorem $2.1$}]\quad
Now let $\varphi(r)$ be a $C^2$ function on $[0,\infty)$ such that
\begin{equation*}
\varphi(r)=\left\{\aligned
1 \quad if ~r\in[0,1],\\
0 \quad if ~r\in [2,\infty),
\endaligned\right.
\end{equation*}
and
$$0\leq \varphi(r)\leq 1, \quad \varphi'(r)\leq 0,\quad \varphi''(r)\leq 0, \quad
\frac{|\varphi'(r)|}{\varphi(r)}\leq C,$$
where $C$ is an absolute constant.
Let define by
$$\phi(x,t)=\varphi(d(x,x_{0},t))=\varphi\left(\frac{d(x,x_{0},t)}{R}\right)
=\varphi\left(\frac{\rho(x,t)}{R}\right),$$
where $\rho(x,t)=d(x,x_{0},t)$.
By using the maximum principle, the argument of Calabi \cite{2} allows us to
suppose that the function $\phi(x,t)$ with support in $B_{2R,T}$, is $C^2$ at the maximum point.
By utilize the Laplacian theorem, we deduce that
\begin{eqnarray}\label{4.1}
\frac{|\nabla \phi|^{2}}{\phi}&\leq& \frac{C}{R^2},
\end{eqnarray}
\begin{eqnarray}\label{4.2}
-\Delta \phi&\leq& \frac{C}{R^2}(1+\sqrt{K}R),
\end{eqnarray}

For any $0\leq T_{1}\leq T$, let $H=\phi G$ and $(x_{1},t_{1})$ be the point in $B_{2R,T_1}$
at which $G$ attain its maximum value. We can suppose that
the value is positive, because otherwise the proof is trivial.
Then at the point $(x_{1}, t_{1})$, we infer
\begin{eqnarray}\label{4.3}
\mathcal{L}(H)\geq 0,\qquad \nabla G=-\frac{G}{\phi}\nabla\phi.
\end{eqnarray}
By the evolution formula of the geodesic length under the Ricci flow \cite{4}, we calculate
\begin{align*}
\phi_{t}G=&-G\phi'\left(\frac{\rho}{R}\right)\frac{1}{R}\frac{d\rho}{dt}
=G\phi'\left(\frac{\rho}{R}\right)\int_{\gamma_{t_1}}\mathrm{Ric}(S,S)ds\\
\leq &G\phi'\left(\frac{\rho}{R}\right)\frac{1}{R}K_{2}\rho\leq G\phi'\left(\frac{\rho}{R}\right)K_{2}\leq G\sqrt{C}K_{2},
\end{align*}
where $\gamma_{t_1}$ is the geodesic connecting $x$ and $x_0$ under the metric
$g(t_1)$, $S$ is the unite tangent vector to $\gamma_{t_1}$, and $ds$ is the
element of the arc length. Hence, by applying \eqref{4.2},  we have
\begin{eqnarray}\label{4.4}
\nonumber
0&\leq &\mathcal{L}(H)\leq~\phi\mathcal{L}G-(m-1)G\left(\Delta\phi-2\frac{|\nabla\phi|^2}{\phi}\right)+\phi_{t}G\\
\nonumber
&\leq &-\frac{1}{a\alpha^{2}\gamma}\phi G^{2}
+\left[\frac{\gamma'}{\gamma}-\left(\frac{2\varphi}{n(m-1)}-\alpha'\right)\frac{1}{\alpha}\right]\phi G\\
\nonumber
&&-\frac{2(\alpha-1)}{n\alpha^{2}}\frac{|\nabla v|^2}{v}\phi G-\frac{\gamma(m-1)(\alpha-1)^2}{n\alpha^2}\frac{|\nabla v|^4}{v^2}\phi
\\
\nonumber
&&+(m-1)\alpha^{2}\gamma\phi K^{2}++2\gamma\phi(\alpha-1)K\frac{|\nabla v|^{2}}{v}
+2m\phi\nabla v\nabla G\\
&&-(m-1)G\left(\Delta\phi-2\frac{|\nabla\phi|^2}{\phi}\right)+\sqrt{C}KG
\end{eqnarray}
Multiply $\phi$, we have
\begin{eqnarray*}
\nonumber
0&\leq &-\frac{1}{a\alpha^{2}\gamma}\phi^{2} G^{2}
+\left[\frac{\gamma'}{\gamma}\phi-\left(\frac{2\varphi}{n(m-1)}-\alpha'\right)\frac{\phi}{\alpha}\right]\phi G\\
\nonumber
&&-\frac{2(\alpha-1)}{n\alpha^{2}}\frac{|\nabla v|^2}{v}\phi^{2} G-\frac{\gamma(m-1)(\alpha-1)^2}{n\alpha^2}\frac{|\nabla v|^4}{v^2}\phi^{2}
\\
\nonumber
&&+(m-1)\alpha^{2}\gamma\phi^{2} K^{2}+2\gamma\phi^{2}(\alpha-1)K\frac{|\nabla v|^{2}}{v}\\
&&-2m\phi^{2}\frac{\nabla\phi}{\phi}G\nabla v-(m-1)\phi G\left(\Delta\phi-2\frac{|\nabla\phi|^2}{\phi}\right)+\sqrt{C}K\phi G
\end{eqnarray*}
Further using the inequality $Ax^{2}+Bx\geq -\frac{B^2}{4A}$ with $A>0$, we have
\begin{equation*}
-\frac{2(\alpha-1)}{n\alpha^{2}}\frac{|\nabla v|^2}{v}\phi^{2} G-2m\phi^{2}\frac{\nabla\phi}{\phi}G\nabla v
\leq \frac{nm^{2}\alpha^{2}}{2(\alpha-1)}\frac{|\nabla\phi|^2}{\phi}\phi G,
\end{equation*}
\begin{equation*}
-\frac{\gamma(m-1)(\alpha-1)^2}{n\alpha^2}\frac{|\nabla v|^4}{v^2}\phi^{2}++2\gamma\phi^{2}(\alpha-1)K\frac{|\nabla v|^{2}}{v}
\leq\frac{n\alpha^{2}K^{2}}{m-1}\phi^{2}\gamma.
\end{equation*}
Hence, we deduce that
\begin{eqnarray}\label{4.5}
\nonumber
0&\leq &-\frac{1}{a\alpha^{2}\gamma}\phi^{2} G^{2}
+\left[\frac{\gamma'}{\gamma}\phi-\left(\frac{2\varphi}{n(m-1)}-\alpha'\right)\frac{\phi}{\alpha}
+ \frac{nm^{2}\alpha^{2}}{2(\alpha-1)}\frac{|\nabla\phi|^2}{\phi}\right.\\
\nonumber
&&\left.
+(m-1)\left(\Delta\phi-2\frac{|\nabla\phi|^2}{\phi}\right)+\sqrt{C}K\right]\phi G\\
&&+(m-1)\alpha^{2}K^{2}\phi^{2}\gamma+\frac{n\alpha^{2}K^{2}}{m-1}\phi^{2}\gamma.
\end{eqnarray}
Combine (4.1), (4.2) and (4.5), we have
\begin{eqnarray*}
0&\leq &-\frac{1}{a\alpha^{2}\gamma}\phi^{2} G^{2}
+\Big[\frac{\gamma'}{\gamma}\phi-\big(\frac{2\varphi}{n(m-1)}-\alpha'\big)\frac{\phi}{\alpha}
+ \frac{Cm^{2}\alpha^{2}}{R^{2}(\alpha-1)}\\
&&
+\frac{C(m-1)}{R^2}(1+\sqrt{K}R)+\sqrt{C}K\Big]\phi G\\
&&+(m-1)\alpha^{2}K^{2}\phi^{2}\gamma+\frac{n\alpha^{2}K^{2}}{m-1}\phi^{2}\gamma.
\end{eqnarray*}
This inequality becomes
\begin{align*}
&\frac{1}{a\alpha^{2}\gamma}\phi^{2} G^{2}-\Big[\frac{\gamma'}{\gamma}\phi-\big(\frac{2\varphi}{n(m-1)}-\alpha'\big)\frac{\phi}{\alpha}
+ \frac{Cm^{2}\alpha^{2}}{R^{2}(\alpha-1)}\\
&
+\frac{C(m-1)}{R^2}(1+\sqrt{K}R)+\sqrt{C}K\Big]\phi G\\
&\leq
(m-1)\alpha^{2}K^{2}\phi^{2}\gamma+\frac{n\alpha^{2}K^{2}}{m-1}\phi^{2}\gamma.
\end{align*}
For the inequality $Ax^{2}-2Bx\leq C$, one has $x\leq \frac{2B}{A}+\left(\frac{C}{A}\right)^{\frac{1}{2}}$, where $A, B, C>0$.
\begin{eqnarray*}
\nonumber
\phi G(x,T_1)&\leq& (\phi G)(x_{1},t_1)\\
\nonumber
&\leq& \Big\{Ca\alpha^{2}\gamma\Big[\frac{m^{2}\alpha^{2}}{R^{2}(\alpha-1)}+\frac{(m-1)}{R^2}(1+\sqrt{K}R)+K\Big]\\
&&+a\alpha^{2}\gamma\phi\Big[\frac{\gamma'}{\gamma}-\big(\frac{2\varphi}{n(m-1)}-\alpha'\big)\frac{1}{\alpha}\Big]\\
&&+\alpha^{2}K\gamma\phi\sqrt{a(m-1)}+\frac{K\alpha^{2}\gamma}{m-1}\phi\sqrt{an}\Big\}(x_{1},t_1).
\end{eqnarray*}
If $\gamma$ is nondecreasing which satisfies the system
\begin{equation*}
\left\{\aligned
\frac{\gamma'}{\gamma}-(\frac{2\varphi}{n}-\alpha')\frac{1}{\alpha}\leq 0,\\
\frac{\gamma\alpha^4}{\alpha-1}\leq C.
\endaligned\right.
\end{equation*}
Recall that $\alpha(t)$ and $\gamma(t)$ are non-decreasing and $t_{1}<T_{1}$. Hence, we have
\begin{eqnarray*}
\nonumber
\phi G(x,T_1)&\leq& (\phi G)(x_{1},t_1)\\
\nonumber
&\leq& Ca\alpha^{2}(T_1)\gamma(T_1)\left[\frac{1}{R^2}\Big(1+\sqrt{K}R\Big)+K\right]+\frac{Ca m^{2}}{R^2}\\
&&+\alpha^{2}(T_1)K\gamma(T_1)\phi\sqrt{a(m-1)}+\frac{K\alpha^{2}(T_1)\gamma(T_1)}{m-1}\phi\sqrt{an}.
\end{eqnarray*}
Hence, we have for  $\phi\equiv 1$ on $B_{R,T}$,
\begin{eqnarray*}
F(x,T_1)&\leq&Ca\alpha^{2}(T_1)\left[\frac{1}{R^2}\Big(1+\sqrt{K}R\Big)+K\right]+\frac{Ca m^{2}}{R^{2}\gamma(T_1)}\\
&&+\alpha^{2}(T_1)K\sqrt{a(m-1)}+\frac{K\alpha^{2}(T_1)}{m-1}\sqrt{an}.
\end{eqnarray*}

If $\gamma$ is nondecreasing which satisfies the system
\begin{equation*}
\left\{\aligned
\frac{\gamma'}{\gamma}-(\frac{2\varphi}{n}-\alpha')\frac{1}{\alpha}\leq 0,\\
\frac{\gamma}{\alpha-1}\leq C.
\endaligned\right.
\end{equation*}
Recall that $\alpha(t)$ and $\gamma(t)$ are non-decreasing and $t_{1}<T_1$. Hence, we have
\begin{eqnarray*}
\nonumber
\phi G(x,T_1)&\leq& (\phi G)(x_{1},t_1)\\
\nonumber
&\leq& Ca\alpha^{2}(T_1)\gamma(T_1)\left[\frac{1}{R^2}\Big(1+\sqrt{K}R\Big)+K\right]+\frac{Ca m^{2}\alpha^{4}(T_1)}{R^2}\\
&&+\alpha^{2}(T_1)K\gamma(T_1)\phi\sqrt{a(m-1)}+\frac{K\alpha^{2}(T_1)\gamma(T_1)}{m-1}\phi\sqrt{an}.
\end{eqnarray*}
Hence, we have for  $\phi\equiv 1$ on $B_{R,T}$,
\begin{eqnarray*}
F(x,T_1)&\leq&Ca\alpha^{2}(T_1)\left[\frac{1}{R^2}\Big(1+\sqrt{K}R\Big)+K\right]+\frac{Ca m^{2}\alpha^{4}(T_1)}{R^{2}\gamma(T_1)}\\
&&+\alpha^{2}(T_1)K\sqrt{a(m-1)}+\frac{K\alpha^{2}(T_1)}{m-1}\sqrt{an}.
\end{eqnarray*}
Because $T_1$ is arbitrary, so the conclusion is valid.
 \end{proof}

\section{\textbf{Appendix}}
We will check some functions $\alpha(t)>1$, $\varphi(t)>0$ and $\gamma(t)>0$ in Remark  satisfy the following two systems
\begin{equation}\label{5.1}
\left\{\aligned
\frac{2\varphi}{n(m-1)}-2(m-1)MK\geq (\frac{2\varphi}{n(m-1)}-\alpha')\frac{1}{\alpha},\\
\frac{2\varphi}{n(m-1)}-\alpha'>0,\\
\frac{\varphi^2}{n(m-1)}+\alpha\varphi'\geq 0.
\endaligned\right.
\end{equation}
and
\begin{equation}\label{5.2}
\left\{\aligned
\frac{\gamma'}{\gamma}-(\frac{2\varphi}{n(m-1)}-\alpha')\frac{1}{\alpha}\leq 0,\\
\frac{\gamma\alpha^4}{\alpha-1}\leq C, ~or~ \frac{\gamma}{\alpha-1}\leq C.
\endaligned\right.
\end{equation}
Besides, $\alpha(t)$ and $\gamma(t)$ are non-decreasing.

$(1)$ Let $\alpha(t)=1+(m-1)MKt$,  $\varphi(t)=\frac{n(m-1)}{t}+n(m-1)^{2}MK$
and $\gamma(t)=(m-1)MKt$.

Direct calculation shows
\begin{align*}
(\mathrm{i})\quad &\frac{2\varphi}{n(m-1)}-\alpha'\\
&=\frac{2}{t}+2(m-1)MK-(m-1)MK>0,\\
(\mathrm{ii})\quad &\frac{\varphi^2}{n(m-1)}+\alpha\varphi'
=\frac{n(m-1)}{t^2}+n(m-1)^{3}M^{2}K^{2}+\frac{2}{t}n(m-1)^{2}MK\\
&
+\Big[1+(m-1)MKt\Big]\Big[-\frac{n(m-1)}{t^2}\Big]>0
\\
(\mathrm{iii})\quad &\alpha\Big(\frac{2\varphi}{n(m-1)}-2(m-1)M K\Big)-\Big(\frac{2\varphi}{n(m-1)}-\alpha'\Big)\\
&=(\alpha-1)\frac{2\varphi}{n(m-1)}-2(m-1)MK\alpha+\alpha'\\
&=2(m-1)MK+2(m-1)^{2}M^{2}K^{2}t-2(m-1)MK\\
&-2(m-1)^{2}M^{2}K^{2}t+\alpha'>0..
\end{align*}
Hence, $\alpha(t)=1+\frac{1}{3}(m-1)MKt$,   $\varphi(t)=\frac{n(m-1)}{t}+\frac{1}{3}n(m-1)^{2}MK$
 satisfy the system (5.1).

On the other hand, one has
\begin{eqnarray*}
&&\frac{\gamma'}{\gamma}-(\frac{2\varphi}{n(m-1)}-\alpha')\frac{1}{\alpha}\\
&=&\frac{1}{t}-\left[\frac{2}{t}+\frac{2}{3}(m-1)MK-\frac{1}{3}(m-1)MK\right]\frac{1}{1+\frac{1}{3}(m-1)MKt}\\
&=&-\frac{1}{t(1+\frac{2}{3}(m-1)MKt)}\\
&\leq & 0,\quad for \quad t\geq 0.
\end{eqnarray*}
and $\frac{\gamma}{\alpha-1}=1$. So, (5.2) is also satisfied.

$(2)$ $\alpha(t)=e^{2(m-1)MKt}$,  $\varphi(t)=\frac{n(m-1)}{t}e^{4(m-1)MKt}$ and $\gamma(t)=te^{2(m-1)MKt}$.
Direct calculation shows
\begin{align*}
(\mathrm{i})\quad &\frac{2\varphi}{n(m-1)}-\alpha'=\frac{2}{t}e^{2(m-1)MKt}(e^{2(m-1)MKt}-(m-1)MKt)>0,\\
(\mathrm{ii})\quad &\frac{\varphi^2}{n(m-1)}+\alpha\varphi'=\frac{n(m-1)}{t^2}e^{6(m-1)MKt}(e^{2(m-1)MKt}-1+4(m-1)MKt)>0,\\
(\mathrm{iii})\quad &\frac{2\varphi}{n(m-1)}-2(m-1)M K-(\frac{2\varphi}{n(m-1)}-\alpha')\frac{1}{\alpha}\\
&=\frac{2}{t}e^{4(m-1)MKt}-2(m-1)MK-\frac{2}{t}e^{2(m-1)MKt}+2(m-1)MK\\
&=\frac{2}{t}e^{2(m-1)MKt}(e^{2(m-1)MKt}-1)\geq 0.
\end{align*}
Hence, $\alpha(t)=e^{2(m-1)MKt}$ and  $\varphi(t)=\frac{n(m-1)}{t}e^{4(m-1)MKt}$ satisfy the system (5.1).

Besides, we have
\begin{eqnarray*}
&&\frac{\gamma'}{\gamma}-(\frac{2\varphi}{n(m-1)}-\alpha')\frac{1}{\alpha}\\
&=&\frac{1+2(m-1)MKt}{t}-\left(\frac{2}{t}e^{2(m-1)MKt}-2(m-1)MK\right)\\
&=&\frac{1}{t}(1+4(m-1)MKt-2e^{2(m-1)MKt})\\
&\leq & 0,\quad for \quad t\geq 0.
\end{eqnarray*}
and as $t\rightarrow 0^{+}$, $\frac{\gamma}{\alpha-1}=\frac{te^{2Kt}}{e^{2Kt}-1}\rightarrow\frac{1}{2K}$. This
implies $\frac{\gamma}{\alpha-1}\leq C$. So, (5.2) is also satisfied.

$(3)$ $\alpha(t)=1+\frac{\sinh((m-1)MKt)\cosh((m-1)MKt)-(m-1)MKt}{\sinh^{2}((m-1)MKt)}$,
$\varphi(t)=2n(m-1)^{2}MK[1+\coth((m-1)MKt)]$ and $\gamma(t)=\tanh((m-1)MKt)$.
Direct calculation shows
$$\alpha'(t)=2(m-1)MK-2(\alpha-1)(m-1)MK\coth[(m-1)MKt].$$
Then
\begin{align*}
(\mathrm{i})\quad &\frac{2\varphi}{n(m-1)}-\alpha'=2(m-1)MK[1+\coth((m-1)MKt)]-2(m-1)MK\\
&\quad\quad \quad\quad+2(\alpha-1)(m-1)MK\coth[(m-1)MKt]>0,\\
(\mathrm{ii})\quad&\alpha(\frac{2\varphi}{n(m-1)}-2(m-1)M K)-(\frac{2\varphi}{n(m-1)}-\alpha')\\
=&2(m-1)MK\alpha[1+\coth((m-1)MKt)]-2(m-1)MK\alpha\\
&-2(m-1)MK[1+\coth(m-1)MKt]+\alpha'\\
=&2(m-1)MK(\alpha-1)[1+\coth((m-1)MKt)]\\
&-2(m-1)MK\alpha+\alpha'\\
=&2(m-1)MK(\alpha-1)\coth((m-1)MKt)-2(m-1)MK+\alpha'=0\\
(\mathrm{iii})\quad &\frac{\varphi^2}{n(m-1)}+\alpha\varphi'\\
=&\frac{n(m-1)^{3}M^{2}K^{2}}{\sinh^{2}(m-1)MKt}\Big([1+\coth(m-1)MKt]^{2}\sinh^{2}(m-1)MKt-\alpha\Big).
\end{align*}
Let $x=(m-1)MKt$, then
\begin{align*}
&[1+\coth(m-1)MKt]^{2}\sinh^{2}(m-1)MKt-\alpha\\
&=\frac{e^{2x}}{(e^{2x}-1)^2}\Big[e^{4x}-2e^{2x}+3+4x\Big].
\end{align*}
Let $f(x)=e^{4x}-2e^{2x}+3+4x$ with $x\leq 0$. Obviously, $f(0)>0$ and
\begin{eqnarray*}
f'(x)&=&2(e^{4x}-e^{2x}+2)>0.
\end{eqnarray*}
Then we get $f(x)>0$ for $x>0$. Hence, we have
\begin{eqnarray*}
\frac{\varphi^2}{n(m-1)}+\alpha\varphi'>0.
\end{eqnarray*}
Hence, $\alpha(t)=1+\frac{\sinh((m-1)MKt)\cosh((m-1)MKt)-(m-1)MKt}{\sinh^{2}((m-1)MKt)}$ and
$\varphi(t)=2n(m-1)^{2}MK[1+\coth((m-1)MKt)]$ satisfy the system (5.1).

On the other hand, as $t\rightarrow 0$, we have $\frac{\gamma\alpha^4}{\alpha-1}\rightarrow 2$;
 $\frac{\gamma\alpha^4}{\alpha-1}\rightarrow 1$ for $t\rightarrow \infty$. These imply $\frac{\gamma\alpha^4}{\alpha-1}\leq C$, here $C$ is a universal constant.\\
 Besides, we have
\begin{eqnarray*}
&&\frac{\gamma'}{\gamma}-(\frac{2\varphi}{n(m-1)}-\alpha')\frac{1}{\alpha}\\
&=&\frac{1}{\alpha}\left[\frac{x\alpha}{\sinh(xt)\cosh(xt)}-2x-2x(1+\alpha)\coth(xt)\right]\\
&=&\frac{1}{\alpha}\left[\frac{x}{\sinh(xt)\cosh(xt)}[\alpha-2(1+\alpha)\cosh^{2}(xt)]-2x\right]\\
&=&\frac{1}{\alpha}\left[\frac{x}{\sinh(xt)}[\alpha(1-2\cosh(xt))-2\cosh(xt)]-2K\right]\\
&\leq & 0,\quad for \quad t\geq 0,
\end{eqnarray*}
where $x=(m-1)MK$.
So, (5.2) is also satisfied.

$(4)$ $\alpha(t)=constant$,  $\varphi(t)=\frac{\alpha n(m-1)}{t}+\frac{n(m-1)^{2}MK}{\alpha-1}$ and $\gamma(t)=t^{\theta}$ with $0<\theta\leq 2$.
Direct calculation gives
\begin{align*}
(\mathrm{i})\quad &\frac{2\varphi}{n(m-1)}-\alpha'=\frac{2\alpha}{t}+\frac{(m-1)MK}{\alpha-1}>0,\\
(\mathrm{ii})\quad &\frac{\varphi^2}{n(m-1)}+\alpha\varphi'=\frac{n(m-1)\alpha^2}{t^2}-\frac{n(m-1)\alpha^2}{t^2}\\
&+\frac{n^{2}(m-1)^{4}M^{2}K^{2}}{(\alpha-1)^2}+\frac{2\alpha n^{2}(m-1)^{3}MK}{(\alpha-1)t}>0,\\
(\mathrm{iii})\quad &\alpha\Big(\frac{2\varphi}{n(m-1)}-2(m-1)M K\Big)-(\frac{2\varphi}{n(m-1)}-\alpha')\\
&=(\alpha-1)\frac{2\varphi}{n(m-1)}-2(m-1)MK>0.
\end{align*}
Hence, $\alpha(t)=constant$, and $\varphi(t)=\frac{\alpha n(m-1)}{t}+\frac{n(m-1)^{2}MK}{\alpha-1}$  satisfy the system (5.1).

 On the other hand, we have
\begin{eqnarray*}
\frac{\gamma'}{\gamma}-(\frac{2\varphi}{n(m-1)}-\alpha')\frac{1}{\alpha}
&=&\frac{\theta}{t}-\frac{2}{t}-\frac{(m-1)MK}{(\alpha-1)\alpha}\\
&\leq & 0,\quad for \quad t\geq 0 \quad and \quad 0<\theta\leq 2.
\end{eqnarray*}
So, (5.2) is also satisfied.

\section{\textbf{Acknowledgement}}
We are grateful to Professor Jiayu Li for his encouragement. We also thanks Professor Qi S Zhang for introduction of this
problem in the summer course.


\end{document}